\pgfplotsset{compat=newest}
\newtheorem{thm}{Theorem}
\newtheorem{cor}{Corollary}
\newtheorem{lem}{Lemma}
\newtheorem{prop}{Proposition}
\newtheorem{con}[thm]{Conjecture}
\newtheorem{ques}{Question}
\begin{document}
\title{Symmetry and asymmetry between positive and negative square energies of graphs}
\author{Clive Elphick\thanks{\texttt{clive.elphick@gmail.com}, 
    School of Mathematics, University of Birmingham, Birmingham, UK.}
    \quad 
    William Linz\thanks{\texttt{wlinz@mailbox.sc.edu}, 
    Department of Mathematics, University of South Carolina, Columbia, South Carolina, 29208, USA. Partially supported by NSF RTG Grant DMS 2038080.}
} 

\maketitle
\abstract{The positive and negative square energies of a graph, $s^+(G)$ and $s^-(G)$, are the sums of squares of the positive and negative eigenvalues of the adjacency matrix, respectively. The first results on square energies revealed symmetry between $s^+(G)$ and $s^-(G)$. This paper reviews examples of asymmetry between these parameters, for example using large random graphs and the ratios $s^+/s^-$ and $s^-/s^+$, as well as new examples of symmetry. We answer some questions previously asked about $s^{+}$ and $s^{-}$ and suggest several further avenues of research.}

\begin{center}\emph{In Memoriam: Prof. Vladimir Nikiforov.}\end{center}

\section{Introduction}

Let $G$ be  a connected graph with $n$ vertices and $m$ edges and let $A$ denote the adjacency matrix of $G$, where $\mu_1 \ge  \ldots \ge \mu_n$ denote the eigenvalues of $A$.  Let $\chi(G)$ denote the chromatic number of $G$ and let:

\[
s^+ = \sum_{\mu_i > 0} \mu_i^2  \mbox{       and        } s^- = \sum_{\mu_i < 0} \mu_i^2.
\]

 Note that:

\[
\sum_{i=1}^n \mu_i^2 = tr(A^2) = 2m = s^+ + s^-.
\]

 Elphick \emph{et al} \cite{elphick16} formulated the following conjecture, which was placed $1^{st}$ in a wide ranging review of unsolved problems in spectral graph theory by Liu and Ning \cite{liu}:
 
\begin{con}\label{con:elphick}
For any connected graph:

\[
\min{(s^+ , s^-)} \ge n - 1.
\]

\end{con}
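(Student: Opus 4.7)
The bipartite case is immediate: for a connected bipartite graph the spectrum of $A$ is symmetric about $0$, so $s^+ = s^- = m \ge n-1$. My plan is therefore to focus on non-bipartite graphs and combine an inductive reduction with a spectral asymmetry bound.

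For the inductive skeleton, let $v$ be a non-cut vertex of $G$, which exists since every connected graph on at least two vertices has one. Cauchy interlacing applied to the principal submatrix indexed by $V(G)\setminus\{v\}$ yields $s^{\pm}(G)\ge s^{\pm}(G-v)$, and the inductive hypothesis applied to the connected graph $G-v$ gives $\min(s^+,s^-)(G)\ge n-2$. The hard step is to upgrade this to $n-1$, which requires showing that the interlacing inequality is strict by at least $1$ for some choice of $v$. I would try to achieve this by selecting $v$ on a shortest odd cycle of $G$ and using local information about that cycle to quantify the jump in $s^-$; the bipartite boundary case, where $s^- = m$ and an extra edge already contributes $1$ to $2m - 2(n-1)$, suggests the kind of counting one needs.

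A complementary, self-contained route reformulates the conjecture as an asymmetry bound. Since $s^+ + s^- = 2m$, Conjecture~\ref{con:elphick} is equivalent to
\[
|s^+ - s^-| \;\le\; 2\,(m - n + 1),
\]
where the right-hand side is twice the cyclomatic number of $G$. Writing $s^+ - s^- = \sum_i \mu_i |\mu_i| = \operatorname{tr}(A\,|A|)$, and decomposing $A = A_+ - A_-$ into its positive and negative semidefinite parts so that $|A| = A_+ + A_-$, the problem becomes an operator-theoretic inequality relating $\operatorname{tr}(A_+^2) - \operatorname{tr}(A_-^2)$ to a combinatorial invariant of $G$. The reformulation is tight both for trees (both sides equal $0$) and for complete graphs (both sides equal $(n-1)(n-2)$), which is mild evidence that it has the correct shape.

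The main obstacle, I expect, is the regime of sparse non-bipartite graphs with only a few small-magnitude negative eigenvalues, where no single $|\mu_i|$ is large enough to push $s^-$ above $n-1$ by itself and there are too few negative eigenvalues clustered near $-1$ to accumulate the required mass. Here the combinatorial content is the delicate interaction between sparsity and non-bipartiteness; isolating the near-tree and near-star extremal configurations and arguing by a carefully chosen minimum counterexample, combined with the interlacing reduction above, seems the most promising overall approach.
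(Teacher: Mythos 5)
The statement you are attempting to prove is Conjecture~\ref{con:elphick}, which is \emph{open}: the paper does not prove it (and explicitly describes it as the first-listed unsolved problem in the Liu--Ning survey), proving only much weaker results such as $\min(s^+,s^-)\ge\sqrt{n}$ in Section~8 via the Ando--Lin bound $\min(s^+,s^-)\ge 2m/\chi(G)$ together with Hong's bound on the spectral radius, plus the conjecture for some special classes (regular graphs, trees, etc.). So no proposal could be checked against a proof in the paper; what I can do is assess whether yours closes the problem. It does not. The ingredients you state correctly are: the bipartite case ($s^+=s^-=m\ge n-1$ by spectral symmetry); the monotonicity $s^{\pm}(G)\ge s^{\pm}(G-v)$ from Cauchy interlacing; and the equivalence of the conjecture with $|s^+-s^-|\le 2(m-n+1)$ (this is the form $\max(s^+,s^-)\le 2m-n+1$ that the paper itself records at the start of Section~6). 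All of these are fine, and the tightness check for trees and $K_n$ is accurate.

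The genuine gap is exactly the step you flag as ``the hard step'': showing that for some non-cut vertex $v$ the interlacing inequality for the deficient side is strict by at least $1$. Nothing in your proposal supplies this, and it is not a technical refinement of the induction --- it \emph{is} the conjecture. Interlacing gives you $\min(s^+,s^-)(G)\ge n-2$ for free by induction, but the extra $2\deg(v)$ of total square energy gained when $v$ is restored can, a priori, be absorbed entirely by $s^+$ (indeed for almost all graphs, by Theorem~\ref{thm:almostall} of the paper, $s^+$ swallows three quarters of $2m$), so there is no general mechanism forcing a unit of it into $s^-$. Your idea of choosing $v$ on a shortest odd cycle also runs into a structural problem: deleting such a vertex can render $G-v$ bipartite or disconnected, so the induction does not cleanly stay within the class of connected non-bipartite graphs, and the ``local information about that cycle'' that would quantify the jump in $s^-$ is not identified. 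The second, operator-theoretic route is an exact restatement ($s^+-s^-=\operatorname{tr}(A|A|)$ versus twice the cyclomatic number) rather than an argument; it changes the language but not the difficulty. In short, the proposal is a reasonable research plan whose verifiable assertions are correct, but it contains no proof of the statement, which remains open.
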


Elphick \emph{et al.} proved this result for various classes of graphs, including regular graphs. 

Note that for trees, $s^+ = s^- = m = n - 1$ and that for complete graphs $s^- = n - 1$. This conjecture provides an example of symmetry between $s^+(G)$ and $s^-(G)$, in that the same lower bound is tight for both $s^{+}$ and $s^{-}$. Abiad \emph{et al} \cite{abiad} gave the name positive and negative square energies to $s^+$ and $s^-$, and made some progress toward proving Conjecture~\ref{con:elphick} by developing new tools. 

As another example of symmetry, Ando and Lin \cite{ando} proved a conjecture due to Wocjan and Elphick \cite{wocjan13} that:

\[
1 + \max{\left(\frac{s^+(G)}{s^-(G)} , \frac{s^-(G)}{s^+(G)}\right)} \le \chi(G). 
\]

Coutinho and Spier \cite{coutinho} have recently strengthened this bound by proving a conjecture due to Wocjan, Elphick and Anekstein \cite{wocjan23} that:

\[
1 + \max{\left(\frac{s^+(G)}{s^-(G)} , \frac{s^-(G)}{s^+(G)}\right)} \le \chi_v(G)\le \chi(G),
\]

where $\chi_v(G)$ denotes the vector chromatic number.

 Because Conjecture~\ref{con:elphick} seems surprisingly difficult, it is useful to develop a more thorough understanding of positive and negative square energies. In this paper, we answer some questions asked by Abiad \textit{et al.}~\cite{abiad}, suggest several new avenues for research on $s^{+}$ and $s^{-}$, and make some initial progress on addressing several of these avenues. We have organised our paper in survey form, centering around different examples demonstrating symmetry or asymmetry between $s^{+}$ and $s^{-}$. By asymmetry, we mean any inequality, statement or expression which holds for $s^{+}$, but not $s^{-}$, or vice-versa. Typically, asymmetry results from contributions of the square of the principal eigenvalue $\mu_1$ to $s^{+}$. We hope that the questions and results we present here will spur further study into positive and negative square energies.  
 
 Sections 2 to 6 investigate examples of asymmetry between $s^+$ and $s^-$. In Section 2, we determine $s^{+}$ and $s^{-}$ for almost all graphs, addressing a question asked by Abiad \textit{et al.}~\cite{abiad}. Our result is the following. 

 \begin{thm}\label{thm:almostall}
Let $G$ be a graph on $n$ vertices. Then, with probability tending to $1$, 
\[s^+(G) = \left(\frac38 + o(1)\right)n^2\]
and 
\[s^-(G) = \left(\frac18 + o(1)\right)n^2.\]
\end{thm}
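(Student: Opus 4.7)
The plan is to model $G$ as $G(n,\tfrac12)$ (equivalent to the uniform distribution on labelled graphs on $n$ vertices) and to write its adjacency matrix as $A = M + \tfrac12(J - I)$, where $J$ is the all-ones matrix and $M$ is the symmetric zero-diagonal matrix whose upper-triangular entries are i.i.d.\ uniform on $\{-\tfrac12,+\tfrac12\}$. Then $M$ is a Wigner matrix of variance $\tfrac14$, so by Wigner's semicircle law the empirical spectral distribution of $M/\sqrt n$ converges weakly to $\rho(x)=\tfrac{2}{\pi}\sqrt{1-x^2}\,\mathbf 1_{[-1,1]}(x)$, and by F\"uredi--Koml\'os we have $\|M\|_{\mathrm{op}} \le (1+o(1))\sqrt n$ with probability $1-o(1)$. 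The term $\tfrac12 J$ is a positive semidefinite rank-one perturbation with unique nonzero eigenvalue $n/2$, so Weyl's inequality and Cauchy interlacing produce, with high probability, one outlier $\mu_1(A)=n/2 + O(\sqrt n)$ and $\mu_i(A)=O(\sqrt n)$ for all $i\ge 2$, with the bulk empirical measure still tending to $\rho$.

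I would then extract $s^+$ and $s^-$ from two linear combinations. First, $s^+(G)+s^-(G) = \operatorname{tr}(A^2) = 2m$, and since $m\sim\mathrm{Bin}\bigl(\binom{n}{2},\tfrac12\bigr)$, Chernoff gives $m=(1/4+o(1))n^2$, hence $s^+(G)+s^-(G) = (1/2+o(1))n^2$. Second, I would use the pointwise identity
\[ s^+(G)-s^-(G) = \sum_{i=1}^n \mu_i|\mu_i| = \mu_1^2 + \sum_{i=2}^n \mu_i|\mu_i|. \]
The outlier estimate yields $\mu_1^2 = n^2/4 + o(n^2)$, and I would argue $\sum_{i\ge 2}\mu_i|\mu_i| = o(n^2)$ from the symmetry of the semicircle, which gives $\int_{-1}^{1} x|x|\,\rho(x)\,dx = 0$. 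Together these produce $s^+(G)-s^-(G) = (1/4+o(1))n^2$, and adding and subtracting the two estimates yields $s^+ = (3/8+o(1))n^2$ and $s^- = (1/8+o(1))n^2$.

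The main technical obstacle is justifying the bulk estimate $\sum_{i\ge 2}\mu_i|\mu_i| = o(n^2)$ cleanly, since weak convergence of the spectral measure only applies directly to bounded continuous test functions of the normalized eigenvalues $\mu_i/\sqrt n$, and $x|x|$ is unbounded. My plan is to condition on the high-probability event $\{\max_{i\ge 2}|\mu_i(A)| \le C\sqrt n\}$ for a suitable constant $C$; restricted to $[-C,C]$ the map $x\mapsto x|x|$ is bounded and continuous, so weak convergence of the bulk spectral measure to $\rho$ gives
\[ \frac{1}{n}\sum_{i=2}^n \frac{\mu_i}{\sqrt n}\,\Bigl|\frac{\mu_i}{\sqrt n}\Bigr| \;\longrightarrow\; \int_{-1}^{1} x|x|\,\rho(x)\,dx \;=\; 0 \]
in probability, which rearranges to exactly the desired $o(n^2)$ rate. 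A more concrete alternative is to use interlacing to replace the eigenvalues of $A$ (for $i\ge 2$) by those of $M$ up to $O(1)$ shifts, after which the joint sign-symmetry $M\stackrel{d}{=}-M$ of the Wigner matrix delivers the bulk cancellation with the same $o(n^2)$ control coming from concentration of the spectrum in $[-2\sqrt n,2\sqrt n]$.
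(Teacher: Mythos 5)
Your proposal is correct and follows essentially the same route as the paper: both combine $s^{+}+s^{-}=2m=(1/2+o(1))n^{2}$, the F\"uredi--Koml\'os estimate $\mu_1^{2}=(1/4+o(1))n^{2}$ with all other eigenvalues $O(\sqrt n)$, and Wigner's semicircle law for the bulk (your identity $\int x|x|\,\rho(x)\,dx=0$ is the same observation as the paper's remark that $\int_{-1}^{0}x^{2}\sqrt{1-x^{2}}\,dx=\int_{0}^{1}x^{2}\sqrt{1-x^{2}}\,dx$). If anything, your treatment is more careful than the paper's, since you explicitly address the issue of applying weak convergence to the unbounded test function by truncating to the high-probability event $\max_{i\ge 2}|\mu_i|\le C\sqrt n$.
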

 
 In Sections 3 and 4, we investigate graphs for which $s^{-}$ is much larger than $s^{+}$. In Section 3, we study the ratios of square energies $s^{+}/s^{-}$ and $s^{-}/s^{+}$ and construct examples of generalised quadrangles where the latter ratio is large. In Section 4, we introduce and study squared spreads of graphs $s^{+} - s^{-}$, in analogy to the well-studied notion of the \textit{spread} of a graph~\cite{GHK2001}. In particular, we construct examples of graphs with very negative squared spread. 
 
 In Section 5, we consider $s^{+}$ and $s^{-}$ for maximal planar graphs. In Section 6, we consider upper bounds for $s^-$ that are not upper bounds for $s^+$. 
 
 Sections 7 to 9 provide further examples of symmetry between $s^+$ and $s^-$. In Section 7, we identify an infinite number of nonbipartite graphs with $s^+ = s^-$, answering a question of Abiad \textit{et al.}~\cite{abiad}. Our constructions are certain infinite families of Kneser graphs. 

 \begin{thm}\label{thm:knes1}
 Let $k \ge 2$. Then, the Kneser graph $K(2k+2j, k)$ has $s^{+} = s^{-}$ for every $1\le j < k$. 
 \end{thm}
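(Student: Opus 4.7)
My plan is to use the closed form for the Kneser spectrum. The graph $K(n,k)$ has eigenvalues $\lambda_i = (-1)^i \binom{n-k-i}{k-i}$ for $i=0,\dots,k$, with multiplicities $m_i = \binom{n}{i} - \binom{n}{i-1}$. When $n = 2k+2j$ with $1 \le j<k$, each $\lambda_i$ is nonzero and $\lambda_i^2 = \binom{k+2j-i}{2j}^2$; since the sign of $\lambda_i$ alternates in $i$, the theorem is equivalent to the purely combinatorial identity
\[
S_j \;:=\; \sum_{i=0}^{k}(-1)^i \left[\binom{n}{i} - \binom{n}{i-1}\right] \binom{k+2j-i}{2j}^2 \;=\; 0.
\]

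The key idea is a bivariate generating function. I would write $\binom{k+2j-i}{2j}^2 = [x^{2j}y^{2j}]\big((1+x)(1+y)\big)^{k+2j-i}$ and set $t = \big((1+x)(1+y)\big)^{-1}$, so that
\[
S_j \;=\; [x^{2j}y^{2j}]\,\big((1+x)(1+y)\big)^{k+2j}\sum_{i=0}^{k} m_i(-t)^i.
\]
A direct calculation yields the polynomial identity $\sum_{i=0}^{n+1} m_i\tau^i = (1-\tau)(1+\tau)^n$. Extending the inner sum from $i \le k$ to $i \le n+1$ (interpreting negative powers of $(1+x)(1+y)$ as formal power series) and simplifying with $u:=x+y+xy$ collapses the extended sum to
\[
[x^{2j}y^{2j}]\,\frac{u^{2k+2j}(u+2)}{(u+1)^{k+1}}.
\]
This expansion involves only powers $u^r$ with $r \ge 2k+2j$, and every monomial of $u^r$ has $(x,y)$-total degree at least $r$. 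Thus $[x^{2j}y^{2j}]u^r = 0$ whenever $r > 4j$, and the entire extracted coefficient vanishes precisely under the hypothesis $j < k$.

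It remains to identify the ``tail'' contribution added by extending the index range. For $k+1 \le i \le k+2j$ the polynomial $\big((1+x)(1+y)\big)^{k+2j-i}$ has degree strictly less than $2j$ in $x$, contributing $0$. For $k+2j+1 \le i \le n+1$, the formal expansion of a negative power of $(1+x)(1+y)$ yields the coefficient $\binom{i-k-1}{2j}^2$. Using the symmetry $m_i = -m_{n+1-i}$ (an immediate consequence of $\binom{n}{i}=\binom{n}{n-i}$), the reindexing $l = n+1-i$ shows that this tail equals exactly $S_j$ again. Therefore the extended sum equals $S_j + S_j = 2S_j$, which is $0$ by the previous paragraph, and hence $S_j = 0$.

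The main obstacle is the careful bookkeeping during the extension, especially verifying that the tail recombines into $+S_j$ rather than $-S_j$. The hypothesis $j<k$ enters only once, in bounding the smallest $u$-power against $4j$; beyond this range the argument genuinely fails, consistent with the observation that $K(4k,k)$ (the boundary case $j=k$) does not satisfy $s^+ = s^-$.
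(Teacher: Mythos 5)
Your proof is correct, and it reaches the identity $s^{+}-s^{-}=\sum_{i=0}^{k}(-1)^{i}\bigl(\binom{n}{i}-\binom{n}{i-1}\bigr)\binom{k+2j-i}{2j}^{2}=0$ by a genuinely different route from the paper. The paper first performs an Abel-summation step to trade the multiplicity differences for plain binomials $\binom{2k+2j}{i}$ multiplying $\binom{k+2j-i}{2j}^{2}+\binom{k+2j-1-i}{2j}^{2}$, then proves a lemma showing this sum of two squared binomials is an even polynomial of degree $4j$ in $k+j-i$, folds the sum over using $\binom{N}{i}=\binom{N}{N-i}$ together with that evenness, and finally kills the full alternating sum with Ruiz's identity (the $(2k+2j)$-th finite difference annihilates polynomials of degree $4j<2k+2j$). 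You instead keep the multiplicities $m_i$ intact, encode $\binom{k+2j-i}{2j}^{2}$ as $[x^{2j}y^{2j}]\bigl((1+x)(1+y)\bigr)^{k+2j-i}$, use the factorization $\sum_{i=0}^{n+1}m_i\tau^{i}=(1-\tau)(1+\tau)^{n}$ to collapse the extended sum to $[x^{2j}y^{2j}]\,u^{2k+2j}(u+2)(1+u)^{-k-1}$ with $u=x+y+xy$, and replace Ruiz's identity by the elementary degree count $2k+2j>4j$. The folding step is structurally parallel in both proofs (your symmetry $m_i=-m_{n+1-i}$ versus the paper's pairing after Abel summation), and I checked your sign bookkeeping in the reindexing $l=n+1-i$: the tail does come out as $+S_j$, so the extended sum is $2S_j=0$. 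Your approach buys a self-contained argument that avoids both the evenness lemma (Lemma~\ref{lem:Plem}) and the external identity of Ruiz, at the cost of some care with formal negative-power expansions; the paper's approach is more readily adapted term-by-term (it applies Ruiz's corollary to each monomial $c_m(k+j-i)^m$), which is how it handles the general $j$ uniformly. Both uses of the hypothesis $j<k$ are the same inequality $4j<2k+2j$ in disguise.
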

 
 In Section 8, we prove a weaker lower bound than Conjecture~\ref{con:elphick}, and in Section 9, we consider disconnected graphs and propose a weaker conjecture than Conjecture~\ref{con:elphick}, but for all graphs. We conclude in Section 10 and make some remarks on Conjecture~\ref{con:elphick}.

\section{Average square energies}

\subsection{Introduction}

Theorem 2.5 in \cite{abiad} proves that if $H = G - e$ (where $e$ is an edge in $G$), then

\[
s^+(G) \ge s^+(H) - \theta_2^2 \mbox{  and  } s^-(G) \ge s^-(H) - \theta_n^2,
\]

where $\theta_1 \ge  ... \ge \theta_n$ are the eigenvalues of $A(H)$.

However, an increase in $s^+$ due to edge deletion is rare, as discussed in Section 2.2 of \cite{abiad}, whilst an increase in $s^-$ due to edge deletion is common, because connected graphs with the minimum number of edges (trees) and with the maximum number of edges ($K_n$) both have $s^- = n - 1$. Note that $s^-$ is maximised for regular complete bipartite graphs, with $s^- = m = n^2/4$ \cite{elphick16}, while $s^+ = (n - 1)^2$ is maximised for complete graphs. This provides an example of asymmetry in square energies. 

This implies that as the number of edges increases,  $s^-$ initially increases but reaches a maximum and then decreases. To quantify this intuition, we define  a new parameter $s^-(n,m)$ to be the average value of $s^-$ for all non-isomorphic  graphs with $n$ vertices and $m$ edges, and define $s^+(n,m)$ similarly. These parameters are termed average square energies. Clearly:

\[
s^+(n,m) + s^-(n,m) = 2m.
\]

We computed $s^+(n,m)$ and $s^-(n,m)$ for $n \le 9$. For example with $n = 9$, $s^+(n,m)$ monotonically increases with $m$ but $s^-(n,m)$ monotonically increases until $m = 24$ and then monotonically decreases.

\subsection{Average square energies for random graphs}

We computed average square energies for samples of random graphs using the Wolfram Mathematica function RandomGraph[n,p], with $n = 100$. The results are in Figure 1, with $s^-$ in red and $s^+$ in blue. It can be seen that $s^+$ monotonically increases with increasing $p$ while $s^-$ reaches a maximum value of $\approx 0.14n^2$ at  $p \approx 0.5$. 

\begin{figure}[ht]
\centering
\begin{tikzpicture}

\begin{axis}[xmin=0, xmax = 1, ymin=0, ymax= 10000, xlabel = {$p$}, ylabel = {average square energies}]

\addplot+[color=red] coordinates{
(0, 0)
(0.1, 464)
(0.2, 848)
(0.3, 1149)
(0.4, 1337)
(0.5, 1437)
(0.6, 1426)
(0.7, 1308)
(0.8, 1080)
(0.9, 701)
(1.0, 99)
};

\addplot+[color=blue] coordinates{
(0, 0)
(0.1, 526)
(0.2, 1132)
(0.3, 1821)
(0.4, 2623)
(0.5, 3513)
(0.6, 4514)
(0.7, 5622)
(0.8, 6840)
(0.9, 8209)
(1.0, 9801)
};

\end{axis}

\end{tikzpicture}

\caption{Average square energies for random graphs with $n = 100$ }
\end{figure}
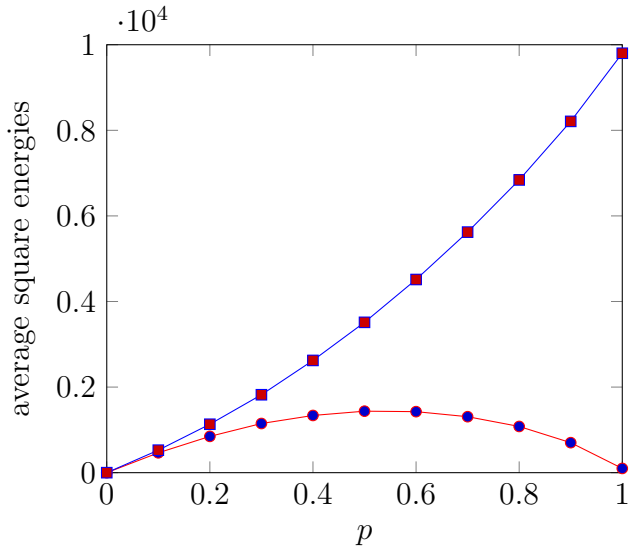

To prove results about average square energies we use the Erd\H{o}s-R\'{e}nyi model $G(n,p)$ where $n$ is the number of vertices and $p$ is the independent probability of each edge being present. 

Nikiforov \cite{nikiforov} investigated Schatten \emph{p}-norms of random graphs, and proved that these norms behave differently for $p < 2$, $p = 2$ and $p > 2$. Square energies correspond to $p = 2$. Nikiforov established that Schatten \emph{p}-norms of almost all graphs can be reduced to finding the Schatten \emph{p}-norms of the random graph $G(n, 1/2)$. We adapt Nikiforov's approach and notation in the following analysis, with a.s. meaning ``with probability tending to 1".

We determine the positive and negative square energies of almost all graphs by finding $s^{+}(G(n, \frac12))$ and $s^{-}(G(n, \frac12))$ a.s. Our argument follows a similar line of argumentation as  Nikiforov~\cite[Theorem 5]{nikiforov} for determining the Schatten $p$-norm of almost all graphs and determining the energy~\cite{Nik2} of almost all graphs. 

\begin{proof}[Proof of Theorem~\ref{thm:almostall}]
First, observe that since $\sum_{i=1}^n\mu_i^2 =2m$, it follows that a.s. 

\begin{equation}\label{eqn:sumsqs}
\sum_{i=1}^n \mu_i^2(G(n,\frac12)) = \left(\frac12 +o(1)\right)n^2.
\end{equation}

It is standard that the adjacency matrix $A(G(n, \frac12))$ is a random symmetric matrix with zero diagonal entries and independent entries $a_{ij}$ satisfying $\mathbb{E}[a_{ij}] = \frac12$, $\text{Var}[a_{ij}^2] = \frac14$. Then, a result of F\"uredi and Koml\'os~\cite[Theorem 1]{FK1981} implies

\begin{equation}\label{eqn:mu1mu2}
 \mu_1^2 = \left(\frac14 + o(1)\right)n^2 \mbox{  a.s.  }  \mbox{  and  } 
 \mu_2^2 \le \left(1+o(1)\right)n \mbox{ a.s. }
\end{equation}

Using Wigner's \cite{wigner} semicircle law in the form given by Arnold~\cite{arnold} (with appropriate normalization):

\[
s^- = \frac{2n^2}{\pi} \int_{-1}^0  x^2 \sqrt{1 - x^2}\, dx ;
\]

\[
s^+ - \mu_1^2 = \frac{2n^2}{\pi} \int_0^1 x^2 \sqrt{1 - x^2}\, dx ;
\]

 These integrals are both $\pi/16$, so a.s. $s^- = s^+ - \mu_1^2$. Therefore, using the formulae in \eqref{eqn:sumsqs} and \eqref{eqn:mu1mu2}, 

\[
s^+ = \frac{1}{2}\left(\frac34 + o(1)\right)n^2 \mbox{  a.s.  and  } s^- = \frac{1}{2}\left(\frac14 + o(1)\right)n^2 \mbox{  a.s.}
\]

\end{proof}

 This asymmetry between $s^+$ and $s^-$ for large random graphs is due to the largest eigenvalue. 

Question 4.1 in \cite{abiad} asks whether the percentage of graphs with $s^- > s^+$ tends to zero as $n \rightarrow \infty$? Theorem~\ref{thm:almostall} shows the answer is yes.

It would be interesting to calculate the expected value of the positive and negative square energies of the random graph $G(n, p)$. We believe that similarly to Theorem~\ref{thm:almostall}, 
\[
s^+ = \frac{1}{2}(p(p + 1) + o(1))n^2 \mbox{  a.s.  and  } s^- = \frac{1}{2}(p(1 - p) + o(1))n^2 \mbox{  a.s.}
\]

 \section{Ratios between positive and negative square energies}
 How large can the ratios $s^{+}/s^{-}$ and $s^{-}/s^{+}$ be? For complete graphs, $s^{+}/s^{-} = n-1$. 

 It is challenging to construct a family of graphs on $n$  vertices such that $s^{-}/s^{+} \ge a_n$ where $a_n \rightarrow \infty$ as $n\rightarrow \infty$. We present such 
 constructions using generalised quadrangles. 

 \begin{thm}\label{thm:gqex}
     There is a family of graphs on $n$ vertices with $s^{-} / s^{+} = \Theta(n^{\frac 14})$ as $n\rightarrow \infty$. 
 \end{thm}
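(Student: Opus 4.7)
The plan is to exhibit the family using collinearity graphs of generalised quadrangles $GQ(s,t)$. Such a graph is strongly regular with parameters $((s+1)(st+1), s(t+1), s-1, t+1)$, hence has exactly three distinct adjacency eigenvalues: $k = s(t+1)$ with multiplicity $1$, $\theta = s-1$ with multiplicity $m_\theta = st(s+1)(t+1)/(s+t)$, and $\tau = -(t+1)$ with multiplicity $m_\tau = v-1-m_\theta$. Consequently
\[
s^+ \;=\; s^2(t+1)^2 + m_\theta (s-1)^2, \qquad s^- \;=\; m_\tau (t+1)^2,
\]
and the problem reduces to choosing $(s,t)$ so that $m_\tau(t+1)^2$ dominates $s^+$ as strongly as the $GQ$ constraints allow.

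The natural choice is $s = q$, $t = q^2$ for $q$ a prime power, which saturates the Higman inequality $t \le s^2$ and so makes $|\tau| = q^2+1$ as large as possible relative to $s-1 = q-1$. Such $GQ(q, q^2)$ exist classically, for instance from the Hermitian variety $\mathcal{H}(3, q^2)$. For this choice $n = (q+1)(q^3+1) = \Theta(q^4)$, and a direct substitution gives $m_\theta = q^2(q^2+1)$, $m_\tau = q(q^2-q+1)$, and
\[
s^+ \;=\; 2q^2(q^2+1)(q^2-q+1), \qquad s^- \;=\; q(q^2-q+1)(q^2+1)^2.
\]
Taking the ratio and simplifying yields
\[
\frac{s^-}{s^+} \;=\; \frac{q^2+1}{2q} \;=\; \Theta(q) \;=\; \Theta(n^{1/4}),
\]
which is the desired conclusion.

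The computation is essentially routine once the family is fixed, so there is no serious obstacle; the only substantive decision is the identification of $GQ(q, q^2)$ as the correct parameter regime. The one thing worth double-checking is that the bound is tight in both directions, i.e.\ that the ratio is truly $\Theta(n^{1/4})$ and not merely $O$ or $\Omega$, but this is immediate from the closed-form value $(q^2+1)/(2q)$, which is squeezed between $q/2$ and $q$ for all $q \ge 1$. As a longer-term sanity check, one could optimise the exact asymptotic expression for $s^-/s^+$ over all $GQ(s,t)$ subject to $t \le s^2$ and $s \le t^2$, and verify that $n^{1/4}$ is indeed the best rate obtainable from the $GQ$ construction.
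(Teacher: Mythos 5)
Your proposal is correct and takes essentially the same approach as the paper: both use the collinearity graphs of generalised quadrangles of order $(q,q^2)$, with $n=(q+1)(q^3+1)=\Theta(q^4)$ and the standard strongly regular spectrum. The only difference is that you carry out the computation exactly (arriving at the clean closed form $s^-/s^+=(q^2+1)/(2q)$) where the paper suppresses lower-order terms, which is a mild improvement in precision but not a different argument.
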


 \begin{proof}
The collinearity graph of a generalised quadrangle of order $(s, t)$ is a strongly regular graph (see~\cite[Proposition 2.2.18]{BvM}) with $(s+1)(st+1)$ vertices and spectrum $k^1r^fa^g$, where \[k = s(t+1),\, r = s-1,\, a=-t-1,\, f = \frac{s(s+1)t(t+1)}{s+t},\, g = \frac{s^2(st+1)}{s+t}. \]
For any prime power $q$, there is a generalised quadrangle of order $(q, q^2)$. Using the parameters specified above, the collinearity graph of this generalised quadrangle has $n \approx q^4$ vertices, positive square energy $s^{+} \approx q^6$ and negative square energy $s^{-} \approx q^7$ (here we suppress all lower order terms). Therefore, $s^{+} \approx n^{6/4}$, $s^{-} \approx n^{7/4}$, and the result follows. 
 \end{proof}

 Another infinite family of graphs with $s^{-}/s^{+} \rightarrow \infty$ as $n\rightarrow \infty$ are the collinearity graphs of generalised quadrangles of order $(q^2, q^3)$ for a prime power $q$. Using the parameters given in the proof of Theorem~\ref{thm:gqex}, we find $n\approx q^7$, $s^{+} \approx q^{11}$ and $s^{-} \approx q^{12}$. While the ratio $s^{-}/s^{+} \approx n^{1/7}$ is not as good as the ratio given by the generalised quadrangles of order $(q, q^2)$, we note that the collinearity graphs of the generalised quadrangles of order $(q^2, q^3)$ have the following remarkable property: not only do we have $s^{+} = o(s^{-})$, but in fact $\mu_1^2 \approx q^{10}$, so we also have $\mu_1^2 = o(s^{+})$!

 We leave it as an open question to determine the maximum size of the ratio $s^{-}/s^{+}$. 

 \begin{ques}
     What is the maximum ratio that $s^{-}/s^{+}$ can achieve as $n\rightarrow \infty$? In particular, is there a family of graphs on $n$ vertices with $n\rightarrow \infty$ such that $s^{-}/s^{+} = \Theta(n)$? 
 \end{ques}

 \section{Squared spread of graphs}
 The \textit{spread} of  $G$  is defined to be $\mu_1 - \mu_n$~\cite{GHK2001}. Analogously, we define the \textit{squared spread} of a graph $G$ to be $s^{+} - s^{-}$. 

 The complete graph $K_n$ has squared spread $s^{+} - s^{-} = (n-1)^2 - (n-1) = (n-1)(n-2)$. We can readily deduce that this is nearly the best possible. 

 \begin{prop}
     For any  graph $G$ on $n$ vertices, $s^{+} - s^{-} = O(n^2)$. Furthermore, if Conjecture~\ref{con:elphick} is true, then $s^{+} - s^{-} \le (n-1)(n-2)$.
 \end{prop}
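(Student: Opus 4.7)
The proof is essentially a one-line calculation in each case, leveraging the trace identity $s^{+} + s^{-} = 2m$.

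My plan is to write $s^{+} - s^{-} = (s^{+} + s^{-}) - 2s^{-} = 2m - 2s^{-}$. For the first part, since $s^{-} \ge 0$ and $m \le \binom{n}{2}$, this immediately gives $s^{+} - s^{-} \le 2m \le n(n-1) = O(n^2)$, which settles the unconditional bound.

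For the conditional part, assume Conjecture~\ref{con:elphick} holds. If $G$ is connected, then $s^{-} \ge n-1$, so
\[
s^{+} - s^{-} = 2m - 2s^{-} \le n(n-1) - 2(n-1) = (n-1)(n-2),
\]
which is the desired inequality (and is tight for $K_n$, matching the remark preceding the proposition). The case of $K_n$ shows that no slack is left, so this is the right way to combine the two ingredients.

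The only subtlety is extending to disconnected $G$: if $G$ has nontrivial components of orders $n_1,\ldots,n_k$ (with $n_i \ge 2$) together with $n_0$ isolated vertices, then $s^{-}(G) = \sum_{i=1}^k s^{-}(G_i) \ge \sum_{i=1}^k (n_i-1)$ and $2m = \sum_{i=1}^k 2m(G_i) \le \sum_{i=1}^k n_i(n_i-1)$. Combining gives $s^{+} - s^{-} \le \sum_{i=1}^k (n_i-1)(n_i-2)$, and a short convexity-style argument (the function $x \mapsto (x-1)(x-2)$ is superadditive on integers $\ge 2$, since $(a-1)(a-2) + (b-1)(b-2) \le (a+b-1)(a+b-2)$ for $a,b\ge 2$) shows this sum is at most $(n-n_0-1)(n-n_0-2) \le (n-1)(n-2)$. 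So the disconnected case reduces cleanly to the connected one.

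I do not expect any real obstacle here: the first bound is immediate, and the second is a direct application of the conjectural lower bound on $s^{-}$ combined with the trace identity. The mildly fiddly step is the disconnected reduction, but it is a short counting argument and not a genuine difficulty.
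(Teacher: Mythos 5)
Your proof is correct, and the core of it (the trace identity $s^{+}+s^{-}=2m$ plus the conjectural bound $s^{-}\ge n-1$ in the connected case) is exactly what the paper does. The only genuine divergence is your treatment of disconnected graphs: you apply the conjecture to each nontrivial component and then invoke superadditivity of $x\mapsto(x-1)(x-2)$ to recombine. That works, but the paper dispatches this case in one line without invoking the conjecture at all: a disconnected graph on $n$ vertices has at most $\binom{n-1}{2}$ edges, so $s^{+}-s^{-}\le s^{+}\le 2m\le (n-1)(n-2)$ directly. The paper's route is shorter and has the small additional merit of showing that the conditional bound is only ever ``conditional'' for connected graphs; your route, while fiddlier, makes explicit that the per-component bounds combine consistently, which is a mildly informative sanity check but not needed here.
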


 \begin{proof}
 We have $s^{+} - s^{-} \le s^{+} \le 2m = O(n^2)$. If $G$ is disconnected, then $s^{+} - s^{-} \le s^{+} \le 2m \le (n-1)(n-2)$. Otherwise, if Conjecture~\ref{con:elphick} is true, then $s^{-} \ge n-1$, so $s^{+} - s^{-} \le (n-1)^2-(n-1) = (n-1)(n-2)$.
 \end{proof}

 We can ask the complementary question: how \textit{negative} can the squared spread be? Perhaps somewhat surprisingly, there are examples of graphs with very negative squared spread. 

 \begin{thm}\label{thm:negsqspd}
     For any $\epsilon > 0$, there is a graph $G$ on $n$ vertices with \[s^{+} - s^{-} = -\Omega(n^{2-\epsilon}).\] 
 \end{thm}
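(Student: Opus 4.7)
Since Theorem~\ref{thm:gqex} already exhibits collinearity graphs of generalised quadrangles of order $(q,q^2)$ with $s^+-s^- = -\Theta(n^{7/4})$, the statement is immediate for every $\epsilon \ge 1/4$. The plan for the remaining range $\epsilon < 1/4$ is to amplify the base asymmetry by a vertex blow-up, trading vertex count against the quality of the asymmetry.

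Concretely, let $H$ be the collinearity graph of a generalised quadrangle of order $(q,q^2)$, so that $h := |V(H)| = \Theta(q^4)$, $s^+(H) = \Theta(q^6)$, $s^-(H) = \Theta(q^7)$, and hence $s^+(H) - s^-(H) = -\Theta(q^7)$. Form $G$ by replacing each vertex of $H$ with an independent set of $t$ vertices, declaring $(u,i)$ adjacent to $(v,j)$ if and only if $u \sim_H v$. The adjacency matrix factorises as $A(G) = A(H) \otimes J_t$, and since $J_t$ has eigenvalues $t$ (simple) and $0$ (with multiplicity $t-1$), the nonzero eigenvalues of $G$ are exactly the numbers $t\mu_i(H)$ with the multiplicities inherited from $H$. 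Hence
\[ s^+(G) = t^2 s^+(H), \qquad s^-(G) = t^2 s^-(H), \qquad s^+(G) - s^-(G) = -\Theta(t^2 q^7), \]
while $n := |V(G)| = th = \Theta(tq^4)$.

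Given $\epsilon \in (0, 1/4)$, I would then choose a prime $q = \Theta(n^{\epsilon})$ (available for all sufficiently large $n$ by Bertrand's postulate) and set $t = \lfloor n/h \rfloor = \Theta(n^{1-4\epsilon})$. Since $\epsilon < 1/4$, this $t$ tends to infinity with $n$, and the exponents balance:
\[ t^2 q^7 = \Theta\bigl(n^{2-8\epsilon} \cdot n^{7\epsilon}\bigr) = \Theta(n^{2-\epsilon}), \]
so $s^+(G) - s^-(G) = -\Theta(n^{2-\epsilon})$. Any slack between $th$ and the prescribed $n$ is absorbed by adjoining isolated vertices, which leave $s^\pm$ unchanged.

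The only step requiring thought is the spectral identity $s^\pm(G) = t^2 s^\pm(H)$, which drops out immediately from the Kronecker factorisation of the adjacency matrix; the remainder is parameter balancing plus a standard appeal to the density of primes, so I do not anticipate a genuine obstacle. The construction is essentially best possible: since $s^+ - s^- \ge -s^- \ge -2m \ge -n(n-1)$, no statement with exponent strictly greater than $2$ can hold, and the blow-up saturates this in the limit $\epsilon \to 0$.
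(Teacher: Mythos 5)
Your proposal is correct and is essentially the paper's argument: the paper likewise amplifies a strongly regular base family with superlinearly negative squared spread via the $t$-blowup, whose nonzero spectrum scales by $t$ so that $s^{\pm}$ scale by $t^2$, and then balances $t$ against the base size. The only difference is the choice of base family (the paper uses Taylor graphs with spread $-\Theta(n^{4/3})$ rather than the generalised quadrangle graphs with $-\Theta(n^{7/4})$), and the paper explicitly remarks after the proof that the generalised quadrangle families from Section 3 work just as well.
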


 \begin{proof}
We first find an infinite family of graphs where the squared spread $s^{+} - s^{-}$ is superlinearly negative. We use a family of strongly regular graphs defined by Taylor~\cite{Tay1, Tay2}. For any odd prime power $q$, there is a strongly regular graph $T_q$ on $n = q^3$ vertices with spectrum~\cite[Section 5.3]{Nik1} $k^1r^fs^g$ where: 
\[k = \frac12(q-1)(q^2+1),\, r = \frac12(q-1),\, s = -\frac12(q^2+1),\, f = (q-1)(q^2+1),\, g = q(q-1).\]
It is straightforward to compute from these parameters that the squared spread of $T_q$ is $s^{+} - s^{-} = -q^4/2 + q^3/2 - q^2/2 + q/2 = -\Theta(n^{4/3}).$

We now consider the $t$-blowups $T_q^{[t]}$ of the Taylor graphs. The \emph{$t$-blowup} of a graph $G$ is a graph $G^{[t]}$ obtained by replacing each vertex of $G$ with a set of $t$ independent vertices and replacing each edge of $G$ by a complete bipartite graph $K_{t, t}$. If $G$ has eigenvalues $\lambda_1, \ldots, \lambda_n$, then $G^{[t]}$ has eigenvalues $t\lambda_1, \ldots, t\lambda_n$ along with $(t-1)n$ additional 0s~\cite{Nik1}. 

The $t$-blowup $T_q^{[t]}$ is a graph on $q^3t$ vertices with squared spread $-\Theta(q^4t^2)$. Setting $t=q^a$ for some integer $a > 0$, the graph $T_q^{[t]}$ has $n = q^{3+a}$ vertices and squared spread $s^{+} - s^{-} = -\Theta(n^{(4+2a)/(3+a)}) = -\Theta(n^{2 - 2/(3+a)})$. Choosing $a$ sufficiently large gives an infinite family of graphs with squared spread $-\Omega(n^{2-\epsilon})$ for any $\epsilon > 0$. 
 \end{proof}

 The blowup construction used in the proof of Theorem~\ref{thm:negsqspd} works for any family of graphs $G$ with negative superlinear squared spread, so in particular it also works for the families of generalised quadrangles used in Section 3. 

 With our current knowledge, the situation for squared spread is therefore not quite symmetric between $s^{+}$ and $s^{-}$. It would be interesting to know if there is a family of graphs on $n$ vertices with $n\rightarrow \infty$ such that $s^{+} - s^{-} = -\Theta(n^{2})$. Unfortunately, graph blowups do not seem to yield such a family.  

 \begin{ques}
 Is there a family of graphs on $n$ vertices with $n \rightarrow \infty$ such that
 \[s^{+} - s^{-} = -\Theta(n^{2})?\]
 \end{ques}

\section{Square energies of maximal planar graphs}

Several authors have investigated upper bounds for the spectral radius of planar graphs. It is of interest to investigate similar bounds for the square energies of planar graphs, in part to see whether this reveals asymmetry between $s^+(G)$
and $s^-(G)$. 

Following Hong \cite{hong}, if Conjecture 1 is true then for any connected planar graph:

\[
\max{(s^+ , s^-)} \le 2m - n + 1 \le 5n - 11;
\]

since a planar graph has at most $3n - 6$ edges. 

A maximal planar graph, for $n \ge 3$,  has $m = 3(n - 2)$ and consequently has $s^+ + s^- = 2m = 6(n - 2)$. Ando and Lin \cite{ando} proved that:

\[
1 + \max{\left(\frac{s^+(G)}{s^-(G)} , \frac{s^-(G)}{s^+(G)}\right)} \le \chi(G). 
\]

Using the four colour theorem, it follows that for planar graphs $s^+(G) \le 3 s^-(G)$ and $s^-(G) \le 3s^+(G)$. $K_4$ provides an example of a connected maximal planar graph for which $s^+ = 3s^-$. We can however find no example of a connected maximal planar graph for which $s^- > s^+$ using the Wolfram database of graphs with up to 100 vertices. (There are planar graphs with $s^- > s^+$.)

This suggests the following question:

\begin{ques}
Is it true that for any connected maximal planar graph $G$, with $n \ge 3$, we have $s^+(G) \ge 3(n - 2) \mbox{ and } s^- \le 3(n - 2)?$
\end{ques}

In addition to this potential upper bound for $s^-(G)$ we can also prove the following lower bound as follows:

\[
4 \ge \chi(G) \ge 1 + \frac{s^+}{s^-} = \frac{2m}{s^-} = \frac{6(n - 2)}{s^-}.
\]

So $s^-(G) \ge 1.5(n - 2)$ and $s^+(G) \le 4.5(n - 2)$ for connected maximal planar graphs. 

To summarise, is it the case that for connected maximal planar graphs:

\[
3(n - 2) \le s^+(G) \le 4.5(n - 2) \mbox{  and  } 1.5(n - 2) \le s^-(G) \le 3(n - 2)?
\]

\section{Upper bounds for $s^-(G)$}

Conjecture~\ref{con:elphick} is equivalent to the statement that for any connected graph:

\[
s^-(G) \le 2m - n + 1, \mbox{  with equality for trees.}
\]

Elphick et al \cite{elphick16} proved that for all graphs $s^-(G) \le n^2/4$, with equality for complete regular bipartite graphs. This bound is not an upper bound for $s^+(G)$, so it provides an example of asymmetry. We note the bound $s^-(G) \le n^2/4$ can be strengthened as follows.

\begin{thm}\label{thm:mubsm}
For any graph $G$, 

\[
s^-(G) \le 2m - \frac{4m^2}{n^2}  \le \frac{n^2}{4}.
\]

\end{thm}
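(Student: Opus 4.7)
The plan is to rewrite the main inequality using the identity $s^+ + s^- = 2m$ from the introduction, which converts the claim $s^-(G)\le 2m - 4m^2/n^2$ into the equivalent assertion $s^+(G)\ge 4m^2/n^2$. This reformulation is convenient because $s^+$ admits the trivial lower bound $s^+ \ge \mu_1^2$, so it suffices to establish $\mu_1^2 \ge 4m^2/n^2$, i.e.\ $\mu_1 \ge 2m/n$.

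For the second step I would invoke the standard inequality $\mu_1 \ge 2m/n$ (the spectral radius is bounded below by the average degree), which follows immediately from the Rayleigh quotient $\mu_1 \ge \mathbf{1}^T A \mathbf{1}/\mathbf{1}^T\mathbf{1} = 2m/n$. Squaring this bound and combining with $s^+ \ge \mu_1^2$ yields $s^+ \ge 4m^2/n^2$, and then the identity $s^- = 2m - s^+$ gives the first inequality of the theorem.

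For the second inequality $2m - 4m^2/n^2 \le n^2/4$, I would simply complete the square:
\[
\frac{n^2}{4} - \left(2m - \frac{4m^2}{n^2}\right) = \left(\frac{n}{2} - \frac{2m}{n}\right)^2 \ge 0,
\]
with equality when $m = n^2/4$, matching the known extremal case of regular complete bipartite graphs mentioned just before the theorem statement.

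There is no real obstacle here; the proof is essentially a two-line derivation once one observes that the claim is secretly about the lower bound $\mu_1 \ge 2m/n$ feeding into $s^+ \ge \mu_1^2$. The only subtlety worth flagging in the write-up is that both inequalities are simultaneously tight exactly for regular complete bipartite graphs (where $\mu_1 = 2m/n = n/2$ and all other positive eigenvalues vanish), which explains why Theorem~\ref{thm:mubsm} strictly strengthens the earlier bound $s^-(G)\le n^2/4$ for all graphs that are not $n^2/4$-edge regular bipartite.
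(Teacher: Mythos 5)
Your proposal is correct and follows essentially the same route as the paper: the lower bound $\mu_1 \ge 2m/n$ combined with $s^{+} \ge \mu_1^2$ and $s^{+}+s^{-}=2m$ gives the first inequality, and completing the square gives the second. You merely make explicit the step $s^{+}\ge\mu_1^2$ that the paper leaves implicit.
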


\begin{proof}
    The lower bound $\mu_1 \ge \frac{2m}{n}$ and the equality $s^{+}+s^{-}=2m$ give the upper bound 
    \[s^{-} = 2m - s^{+} \le 2m - \frac{4m^2}{n^2}.\]
    The inequality $2m - \frac{4m^2}{n^2}  \le \frac{n^2}{4}$ is equivalent to 
    \[\left(\frac{n}{2} - \frac{2m}{n}\right)^2 \ge 0.\]
\end{proof}

Theorem~\ref{thm:mubsm} is exact for regular complete bipartite graphs. These three upper bounds for $s^-(G)$ are illustrated in Figure 2, which is the same as Figure 1 with $n = 100$ except that we have replaced probability with edges on the x axis. The blue line is average square energies for random graphs; the green line is $s^- = 2m - n + 1$; the red line is $s^- = n^2/4$; and the brown line is $s^- = 2m - 4m^2/n^2 = d(n - d)$, where $d$ denotes average degree.

\begin{figure}[ht]
\centering
\begin{tikzpicture}

\begin{axis}[xmin=0, xmax = 4950, ymin=0, ymax= 10000, xlabel = {$m$}, ylabel = {negative square energy}]

\addplot+[color=blue] coordinates{
(0, 0)
(495, 464)
(990, 848)
(1485, 1149)
(1980, 1337)
(2475, 1437)
(2970, 1426)
(3465, 1308)
(3960, 1080)
(4455, 701)
(4950, 99)
};

\addplot+[color=red] coordinates{
(0, 2500)
(4950, 2500)
};

\addplot+[color=green] coordinates{
(99, 99)
(4950, 9801)
};

\addplot+[color=brown] coordinates{
(0, 0)
(200, 384)
(495, 892)
(990, 1588)
(1485, 2088)
(1980, 2392)
(2475, 2500)
(2970, 2412)
(3465, 2128)
(3960, 1648)
(4455, 972)
(4750, 475)
(4950, 99)
};

\end{axis}

\end{tikzpicture}

\caption{Upper bounds for $s^-$ and average values of $s^-$ with $n = 100$  }
\end{figure}
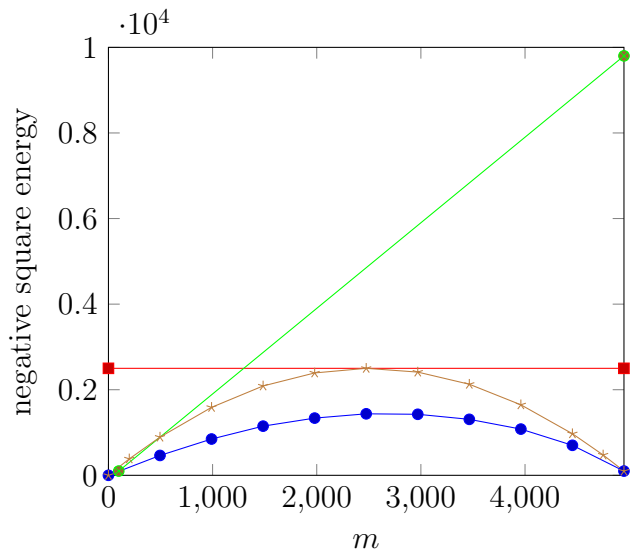

The bound from Theorem~\ref{thm:mubsm} reflects the symmetry around $p = 1/2$ that is observed in the experimental results for large enough random graphs.

 \section{Families of non-bipartite graphs with $s^+ = s^-$}

All bipartite graphs have equal positive and negative square energy. It was asked by Abiad \textit{et al.}~\cite{abiad} whether there are non-bipartite graphs with equal positive and negative square energy. The first author provided the example of the Kneser graph $K(6, 2)$. Recall that the Kneser graph $K(n, k)$ is a graph with vertex set $\binom{[n]}{k}$, the $k$-element subsets of the set $[n] := \{1, 2, \ldots, n\}$, and with an edge $AB \in K(n, k)$ if and only if $A\cap B = \emptyset$. 

 We extend the  example of $K(6, 2)$ to give an infinite family of Kneser graphs with equal positive and negative square energy. We then indicate how to extend this infinite family further to prove Theorem~\ref{thm:knes1}. 

 Recall that the eigenvalues of the Kneser graph $K(n, k)$ are
 \[\mu_i = (-1)^{i}\binom{n-k-i}{k-i},\]
 with $0\le i\le k$ and corresponding multiplicities
 \[\binom{n}{i} - \binom{n}{i-1}, \]
with the convention $\binom{n}{-1} = 0$.

 \begin{thm}\label{thm:2k+2thm}
 The Kneser graph $K(2k+2, k)$ has $s^{+} = s^{-}$ for every $k \ge 2$.  
 \end{thm}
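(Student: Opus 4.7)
The plan is to compute $s^{+}-s^{-}$ directly from the spectrum, collapse it to a single alternating binomial sum, and annihilate that sum via a degree bound plus a symmetry. Using the eigenvalue formulas quoted just above the theorem, $K(2k+2,k)$ has $\mu_i = (-1)^i\binom{k+2-i}{2}$ with multiplicity $m_i = \binom{2k+2}{i}-\binom{2k+2}{i-1}$ for $0\le i\le k$, so $\operatorname{sign}(\mu_i)=(-1)^i$ and
\[
s^{+}-s^{-} \;=\; \sum_{i=0}^{k}(-1)^i m_i \binom{k+2-i}{2}^2.
\]
I would expand $m_i$, shift the second piece by $j=i-1$, and merge the two sums. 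Boundary terms at $i=0$ (where $\binom{2k+2}{-1}=0$) and at the new upper endpoint (where $\binom{1}{2}^2=0$) vanish, and the ranges line up to give
\[
s^{+}-s^{-} \;=\; \sum_{i=0}^{k}(-1)^i \binom{2k+2}{i}\, P(i),
\qquad P(i) \;:=\; \binom{k+2-i}{2}^2+\binom{k+1-i}{2}^2.
\]

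Call this sum $T$. I would then establish three properties. (a) $P$ is a polynomial in $i$ of degree $4$, so for $k\ge 2$ we have $n := 2k+2\ge 6 > \deg P$, and the standard finite-difference identity yields $\sum_{i=0}^{n}(-1)^i\binom{n}{i} P(i) = 0$. (b) $P(k+1)=\binom{1}{2}^2+\binom{0}{2}^2=0$. (c) $P$ is symmetric about $i=k+1$, i.e.\ $P(n-j)=P(j)$; this reduces to the two elementary polynomial identities $\binom{j-k}{2}=\binom{k+1-j}{2}$ and $\binom{j-k-1}{2}=\binom{k+2-j}{2}$, each proved by directly expanding the product formula for the binomial coefficient.

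Splitting the full alternating sum from (a) at the midpoint $i=k+1$ then gives three pieces: the head $T$, a middle term that vanishes by (b), and a tail. For the tail $i=k+2,\ldots,n$, substituting $i=n-j$ and using $\binom{n}{i}=\binom{n}{n-i}$, $P(n-j)=P(j)$, and $(-1)^{n-j}=(-1)^j$ (since $n$ is even) shows the tail also equals $T$. Hence $0 = T+0+T = 2T$, forcing $T=0$ and so $s^{+}=s^{-}$. The only real obstacle is the index-shifting bookkeeping: tracking signs and vanishing boundary terms when collapsing the two sums into one, and again when pairing the two halves of the extended sum. Nothing deeper is required, and the same template (polynomiality of $|\mu_i|^2$ in $i$, plus midpoint symmetry) should be what drives the more general Theorem~\ref{thm:knes1}.
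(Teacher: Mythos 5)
Your proposal is correct and follows essentially the same route as the paper: both reduce to the single alternating sum $\sum_{i=0}^{k}(-1)^i\binom{2k+2}{i}\bigl(\binom{k+2-i}{2}^2+\binom{k+1-i}{2}^2\bigr)$, use the vanishing at $i=k+1$ and the symmetry about that midpoint to extend to the full sum over $0\le i\le 2k+2$, and kill the result by the alternating-binomial/finite-difference identity (the paper cites this as Ruiz's corollary, after rewriting the bracket as $\tfrac12\bigl((k+1-i)^4+(k+1-i)^2\bigr)$). Your closing remark about the general template is also exactly how the paper handles Theorem~\ref{thm:knes1}.
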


 Note the example of $K(6, 2)$ is the special case of Theorem~\ref{thm:2k+2thm} when $k=2$. 

 We recall the following identity proved by Ruiz~\cite{Ruiz}.
 \begin{thm}[Ruiz]\label{thm:ruiz}
For all integers $n\ge 1$ and all real numbers $x$
\begin{equation}\label{eq:ruizfact}
\sum_{i=0}^{n}(-1)^{i}\binom{n}{i}(x-i)^n = n!.
\end{equation}
 \end{thm}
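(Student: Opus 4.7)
The plan is to recognize the left-hand side of Ruiz's identity as a disguised form of the $n$-th forward finite difference of the monomial $y^n$. Let the forward difference operator be $\Delta f(y) = f(y+1) - f(y)$. A routine induction (or a single binomial expansion) gives the closed form
\[\Delta^n f(y) = \sum_{k=0}^{n} (-1)^{n-k}\binom{n}{k} f(y+k).\]
Substituting $f(y) = y^n$, reindexing by $i = n-k$, and finally letting $x = y+n$ converts the right-hand side of this closed form into exactly $\sum_{i=0}^{n}(-1)^i \binom{n}{i}(x-i)^n$. Thus Ruiz's identity is equivalent to the claim $\Delta^n(y^n) = n!$ for all $y$, which no longer depends on $x$.

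To establish $\Delta^n(y^n) = n!$, I would prove the following degree-reduction lemma by induction on $d$: for any polynomial $p(y)$ of degree $d$ with leading coefficient $c$, the polynomial $\Delta p(y)$ has degree $d-1$ and leading coefficient $cd$. This is immediate from the binomial expansion $(y+1)^d - y^d = d\,y^{d-1} + O(y^{d-2})$: the top $y^d$ term cancels in $\Delta p$, leaving $cd\,y^{d-1}$ as the new leading term. Iterating this lemma $n$ times starting from $p(y) = y^n$ (monic, degree $n$) produces a constant polynomial whose value is the telescoping product $n\cdot (n-1)\cdots 2 \cdot 1 = n!$, as required. Since $\Delta^n(y^n)$ is a constant independent of $y$, the identity holds uniformly in $x$.

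I do not anticipate any substantive obstacle: all steps are elementary manipulations with polynomials, and the only delicate point is the careful bookkeeping of signs and index shifts when translating between the finite-difference convention and the form in which Ruiz states the identity. As a cleaner alternative, one could prove the identity via exponential generating functions: summing $e^{(x-i)t}$ against the alternating binomial weights gives $e^{xt}(1-e^{-t})^n$, and since $(1-e^{-t})^n = t^n + O(t^{n+1})$, extracting the coefficient of $t^n$ on both sides yields $1 = \tfrac{1}{n!}\sum_{i=0}^{n}(-1)^i\binom{n}{i}(x-i)^n$ at once.
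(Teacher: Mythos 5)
Your proof is correct. Note that the paper itself does not prove this identity at all: it is imported as a known result with a citation to Ruiz's 1996 note, so there is no in-paper argument to compare against. Your reduction of the sum to the $n$-th forward difference $\Delta^n(y^n)$ via the reindexing $i = n-k$, $x = y+n$ is accurate (the signs and the symmetry $\binom{n}{k}=\binom{n}{n-k}$ work out exactly as you say), and the degree-reduction lemma correctly yields $\Delta^n(y^n) = n!$ as a constant, which explains why the identity holds uniformly in $x$. The exponential generating function alternative is also sound: the coefficient of $t^n$ in $e^{xt}(1-e^{-t})^n$ is indeed $1$ because $(1-e^{-t})^n = t^n + O(t^{n+1})$ and only the constant term of $e^{xt}$ contributes at order $t^n$. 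Either argument would serve as a self-contained replacement for the citation; the finite-difference route has the mild advantage of also delivering Corollary~\ref{cor:ruiz} for free, since $\Delta^n$ annihilates every polynomial of degree less than $n$, which is exactly the statement $\sum_{i=0}^n(-1)^i\binom{n}{i}(x-i)^{n-j}=0$ for $1\le j\le n$ that the paper actually uses in the proofs of Theorems~\ref{thm:2k+2thm} and~\ref{thm:knes1}.
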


 We need the following corollary, also proved by Ruiz~\cite{Ruiz}, which is obtained by differentiating \eqref{eq:ruizfact} $j$ times.

 \begin{cor}[Ruiz]\label{cor:ruiz}
For all integers $n\ge 1$, for all real numbers $x$, and for an integer $j$ with $1\le j\le n$, 
\begin{equation}\label{eq:ruiz0}
\sum_{i=0}^n(-1)^i\binom{n}{i}(x-i)^{n-j} = 0.
\end{equation}
 \end{cor}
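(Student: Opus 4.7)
The plan is to obtain the corollary as an immediate consequence of Theorem~\ref{thm:ruiz} by repeated differentiation in $x$. View both sides of the identity
\[\sum_{i=0}^{n}(-1)^{i}\binom{n}{i}(x-i)^n = n!\]
as functions of the real variable $x$. The right-hand side is the constant $n!$, so its $j$-th derivative vanishes for every $j \ge 1$.

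Next, I would differentiate the left-hand side term-by-term $j$ times, using
\[\frac{d^j}{dx^j}(x-i)^n = n(n-1)\cdots(n-j+1)(x-i)^{n-j}\]
which is valid for $1 \le j \le n$ (and yields $(x-i)^{0} = 1$ when $j = n$). The factor $n(n-1)\cdots(n-j+1) = n!/(n-j)!$ is independent of $i$, so it pulls out of the sum, leaving
\[\frac{n!}{(n-j)!}\sum_{i=0}^{n}(-1)^{i}\binom{n}{i}(x-i)^{n-j} = 0.\]
Finally, since $n!/(n-j)!$ is a nonzero integer, dividing through yields the claimed identity.

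There is essentially no obstacle here: the only point worth flagging is that the argument needs $j \le n$ so that the exponent $n-j$ remains nonnegative and the differentiations do not trivialize the $(x-i)^n$ terms before we reach $j$ derivatives; the boundary case $j = n$ recovers the familiar alternating binomial identity $\sum_{i=0}^{n}(-1)^i\binom{n}{i} = 0$, consistent with \eqref{eq:ruiz0}. Since the displayed equation in Corollary~\ref{cor:ruiz} is a polynomial identity in $x$, validity on all of $\mathbb{R}$ follows without any additional argument.
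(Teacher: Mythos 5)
Your proposal is correct and follows exactly the route the paper indicates: differentiating the Ruiz identity \eqref{eq:ruizfact} $j$ times in $x$, noting the constant right-hand side has vanishing derivative, and cancelling the nonzero factor $n!/(n-j)!$. Your additional remarks on the necessity of $j\le n$ and the boundary case $j=n$ are accurate but not needed beyond what the paper states.
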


 \begin{proof}[Proof of Theorem~\ref{thm:2k+2thm}]
  If $n=2k+2$, then for $0\le i\le k$ we have
  \[\mu_i = (-1)^{i}\binom{k+2-i}{k-i} = (-1)^i\binom{k+2-i}{2}.\]
  Therefore, the positive and negative square energies are given by 
  \[s^{+} = \sum_{j=0}^{\lfloor{\frac{k}{2}\rfloor}}\binom{k+2-2j}{2}^2\left(\binom{2k+2}{2j} - \binom{2k+2}{2j-1}\right);\]
  \[s^{-} = \sum_{j=0}^{\lfloor{\frac{k-1}{2}\rfloor}}\binom{k+2 - (2j+1)}{2}^2\left(\binom{2k+2}{2j+1} - \binom{2k+2}{2j}\right)\]
  After rearranging and grouping by binomial coefficients of the form $\binom{2k+2}{j}$, in order to prove $s^{+} = s^{-}$, it suffices to show that
  \begin{equation}\label{eq:sqbinomeqn}
  \sum_{j=0}^k(-1)^j\binom{2k+2}{j}\left(\binom{k+2 - j}{2}^2 +\binom{k+1-j}{2}^2\right) = 0.
  \end{equation}
    Note that for any integer $a$ 
    \begin{align*}
 \binom{a}{2}^2+\binom{a-1}{2}^2 &= \frac{(a(a-1))^2 + ((a-1)(a-2))^2}{4}\\
 &=\frac{(a-1)^2(a^2+(a-2)^2)}{4}\\
 &=\frac{(a-1)^2(a^2-2a+2)}{2} = \frac12\left((a-1)^4+(a-1)^2\right).\\
 \end{align*}
  Therefore, after simplifying the sums of squares, \eqref{eq:sqbinomeqn} can be rewritten as 
  \[\sum_{j=0}^{k}(-1)^j\binom{2k+2}{j}\left((k+1-j)^4+(k+1-j)^2\right) = 0.\]
  Observe that if $j=k+1$, then $\binom{2k+2}{j}\left((k+1-j)^4+(k+1-j)^2\right) = 0$, and if $k+1 < j \le 2k+2$, then, \[(-1)^j\binom{2k+2}{j}\left((k+1-j)^4+(k+1-j)^2\right) = (-1)^{2k+2-j}\binom{2k+2}{2k+2-j}\left((j-k-1)^4 + (j-k-1)^2\right),\] so that
  \[\sum_{j=0}^{k}(-1)^j\binom{2k+2}{j}\left((k+1-j)^4+(k+1-j)^2\right) = \sum_{j=k+2}^{2k+2}(-1)^j\binom{2k+2}{j}\left((k+1-j)^4+(k+1-j)^2\right. )\]
  Hence, it suffices to show that
  \begin{equation}\label{eq:2k+2eqn}
\sum_{j=0}^{2k+2}(-1)^j\binom{2k+2}{j}\left((k+1-j)^4+(k+1-j)^2\right) = 0.
  \end{equation}
  But now Corollary~\ref{cor:ruiz} implies (with $x=k+1$) that
  \[\sum_{j=0}^{2k+2}(-1)^j\binom{2k+2}{j}(k+1-j)^4 = 0\]
  and 
  \[\sum_{j=0}^{2k+2}(-1)^j\binom{2k+2}{j}(k+1-j)^2 = 0,\]
  so \eqref{eq:2k+2eqn} holds and $s^{+}(K(2k+2, k)) = s^{-}(K(2k+2, k))$.
 \end{proof}

 Theorem~\ref{thm:knes1} can be proved similarly to Theorem~\ref{thm:2k+2thm}. The main technical difference is encapsulated in the following lemma. 

 \begin{lem}\label{lem:Plem}
 For a positive integer $j$, consider the function $P(a)=\binom{a}{2j}^2+\binom{a-1}{2j}^2$. Then, the polynomial $P(a)$ satisfies the following properties. 
  \begin{enumerate}
     \item $P(a) = 0$ for $a=1, \ldots, 2j-1$.
     \item $P(a) = Q(a-j)$, where $Q(a-j) = \sum_{m=0}^{4j}c_m(a-j)^m$ is a polynomial of degree $4j$ in the variable $a-j$ with $c_m = 0$ when $m$ is odd. 
 \end{enumerate}
 \end{lem}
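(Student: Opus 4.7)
The plan is to handle the two properties of Lemma \ref{lem:Plem} separately, regarding $\binom{a}{2j}$ throughout as the degree-$2j$ polynomial $a(a-1)\cdots(a-2j+1)/(2j)!$ in the indeterminate $a$. For property (1), I would observe that $\binom{a}{2j}$ has $a=0,1,\ldots,2j-1$ as its roots, while $\binom{a-1}{2j}$ has $a=1,2,\ldots,2j$ as its roots. The intersection of these root sets is $\{1,2,\ldots,2j-1\}$, so at each such $a$ both squared summands of $P(a)$ vanish simultaneously and hence $P(a)=0$.

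For property (2), the degree claim is immediate, since $\binom{a}{2j}^2$ and $\binom{a-1}{2j}^2$ are both of degree $4j$ with the same positive leading coefficient $1/((2j)!)^2$, so the sum $P(a)$ has degree exactly $4j$. The substantive content is that only even powers of $a-j$ appear. This is equivalent to the symmetry $P(a)=P(2j-a)$, because writing $P(a)=Q(a-j)$ converts the symmetry into $Q(x)=Q(-x)$, which forces every odd-indexed coefficient $c_m$ to vanish. To produce the symmetry, I would invoke the upper negation identity $\binom{m}{k}=(-1)^k\binom{k-m-1}{k}$, valid as a polynomial identity in $m$, applied with $m=a-1$ and the even value $k=2j$; this yields $\binom{a-1}{2j}=\binom{2j-a}{2j}$, and hence
\[
P(a) = \binom{a}{2j}^2 + \binom{2j-a}{2j}^2,
\]
which is manifestly invariant under $a\mapsto 2j-a$.

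I do not anticipate any genuine obstacle here: both ingredients (the roots of the falling factorial and the upper negation identity) are elementary, and each half of the lemma follows in a few lines. The only point requiring care is that the conclusion is a statement about polynomial coefficients rather than about particular numerical values, so the symmetry argument must be executed as an identity of polynomials in $a$, after which reading off the coefficients of $Q(a-j)$ gives the stated vanishing of the odd $c_m$.
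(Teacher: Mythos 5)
Your proof is correct, and for Property 2 it takes a genuinely different route from the paper. The paper proves both properties by explicitly factoring $P(a) = \left(\tfrac{1}{(2j)!}\right)^2\bigl((a-1)\cdots(a-2j+1)\bigr)^2\bigl(a^2+(a-2j)^2\bigr)$, reading off Property 1 from the common factors $a-1,\ldots,a-2j+1$, and then obtaining Property 2 by pairing the factors as $(a-j+m)(a-j-m)=(a-j)^2-m^2$ so that $P$ is manifestly a polynomial in $(a-j)^2$. Your Property 1 argument (intersecting the root sets of the two binomial polynomials) is essentially the paper's in different words. For Property 2, however, you replace the explicit factorization with the reflection symmetry $P(a)=P(2j-a)$, derived from the upper negation identity $\binom{a-1}{2j}=\binom{2j-a}{2j}$, and then note that evenness of $Q(x)=P(x+j)$ kills the odd coefficients. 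This is cleaner and shorter, and it isolates the real reason the odd coefficients vanish; what it gives up is the explicit closed form
\[
P(a) \;=\; \left(\frac{1}{(2j)!}\right)^{2}(a-j)^2\bigl((a-j)^2+j^2\bigr)\prod_{m=1}^{j-1}\bigl((a-j)^2-m^2\bigr)^2,
\]
which the paper's computation produces as a by-product (though the paper is not actually using that extra information later --- the proof of Theorem~\ref{thm:knes1} only needs the vanishing of the odd $c_m$, so your version would serve equally well). One small point of care, which you already flag: the negation identity must be read as an identity of polynomials in $a$, not just at integer points; since both sides are polynomials of degree $2j$ agreeing on infinitely many integers, this is immediate.
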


 \begin{proof}[Proof of Lemma~\ref{lem:Plem}]
Note that for any integer $a$ 
    \begin{align*}
 \binom{a}{2j}^2+\binom{a-1}{2j}^2 &= \left(\frac{1}{(2j)!}\right)^2\left((a(a-1)\cdots(a-2j+1))^2+((a-1)(a-2)\cdots(a-2j))^2\right)\\
 &=\left(\frac{1}{(2j)!}\right)^2((a-1)\cdots(a-2j+1))^2(a^2+(a-2j)^2)\\
 &=\frac12\left(\frac{1}{(2j)!}\right)^2((a-1)\cdots(a-2j+1))^2((a-j)^2+j^2).\\
 \end{align*}
We need to prove two facts about $P(a)$:
 \begin{enumerate}
     \item $P(a) = 0$ for $a=1, \ldots, 2j-1$.
     \item $P(a) = Q(a-j)$, where $Q(a-j) = \sum_{m=0}^{4j}c_m(a-j)^m$ is a polynomial of degree $4j$ in the variable $a-j$ with $c_m = 0$ when $m$ is odd. 
 \end{enumerate}
 The first of these statements is clear since $a-1, a-2, \ldots, a-2j+1$ are factors of $P(a)$. The second follows from the following observation: for any integer $m$ with $1\le m \le j-1$, we have $(a - j + m)(a - j - m) = (a-j)^2 - m^2$. Hence, in the expression $(a-1)\cdots (a-2j+1)$, we may pair off the factors $(a-j+m)(a-j-m)$  for $1\le m \le j-1$, so that 
 \[P(a) = \frac12\left(\frac{1}{(2j)!}\right)^2(a-j)^2((a-j)^2+j^2)\prod_{m=1}^{j-1}((a-j)^2-m^2)^2.\]
 Expanding the product leaves a polynomial of degree $2j$ in the variable $x = (a - j)^2$, or equivalently a polynomial of degree $4j$ in the variable $y=a-j$ where the coefficients of odd power terms are zero. 
 \end{proof}

 The proof of Theorem~\ref{thm:knes1} follows the same line of argumentation as Theorem~\ref{thm:2k+2thm}, with the polynomial $Q(a-j)$ from Lemma~\ref{lem:Plem} used in place of the polynomial $\binom{a}{2}^2 + \binom{a-1}{2}^2 = \frac12\left((a-1)^4+(a-1)^2\right)$.

 \begin{proof}[Proof of Theorem~\ref{thm:knes1}]
  If $n=2k+2j$, then for $0\le i\le k$ we have
  \[\mu_i = (-1)^{i}\binom{k+2j-i}{k-i} = (-1)^i\binom{k+2j-i}{2j}.\]
  Therefore, the positive and negative square energies are given by 
  \[s^{+} = \sum_{i=0}^{\lfloor{\frac{k}{2}\rfloor}}\binom{k+2j-2i}{2j}^2\left(\binom{2k+2j}{2i} - \binom{2k+2j}{2i-1}\right);\]
  \[s^{-} = \sum_{j=0}^{\lfloor{\frac{k-1}{2}\rfloor}}\binom{k+2j - (2i+1)}{2j}^2\left(\binom{2k+2j}{2i+1} - \binom{2k+2j}{2i}\right)\]
  After rearranging and grouping by binomial coefficients of the form $\binom{2k+2}{i}$, in order to prove $s^{+} = s^{-}$, it suffices to show that
  \begin{equation}\label{eq:sqbinomeqn2j}
  \sum_{i=0}^k(-1)^i\binom{2k+2j}{i}\left(\binom{k+2j - i}{2j}^2 +\binom{k+(2j-1)-i}{2j}^2\right) = 0.
  \end{equation}
   By Lemma~\ref{lem:Plem}, the sum of squares $\binom{k+2j - i}{2j}^2 +\binom{k+(2j-1)-i}{2j}^2$ can be expressed as a polynomial $Q(k+j-i) = \sum_{m=0}^{4j}c_m(k+j-i)^m$ with $c_m = 0$ whenever $m$ is odd.  
  Therefore, \eqref{eq:sqbinomeqn2j} can be rewritten as 
  \[\sum_{i=0}^{k}(-1)^i\binom{2k+2j}{i}Q(k+j-i) = 0.\]
  Observe that if $k+1\le i\le k+2j-1$, then $\binom{2k+2}{i}Q(k+j-i) = 0$, by Property 1 of Lemma~\ref{lem:Plem}, and if $k+2j \le i \le 2k+2j$, then by Property 2 of Lemma!\ref{lem:Plem}, \[(-1)^i\binom{2k+2j}{i}Q(k+j-i)= (-1)^{2k+2j-i}\binom{2k+2j}{2k+2j-i}Q(k+j - (2k+2j-i)),\] so that
  \[\sum_{i=0}^{k}(-1)^i\binom{2k+2}{i}Q(k+j-i) = \sum_{i=k+2j}^{2k+2j}(-1)^i\binom{2k+2}{i}Q(k+j-i). \]
  Hence, it suffices to show that
  \begin{equation}\label{eq:2k+2jeqn}
\sum_{i=0}^{2k+2j}(-1)^i\binom{2k+2j}{i}Q(k+j-i) = 0.
  \end{equation}
  But now Corollary~\ref{cor:ruiz} implies \eqref{eq:2k+2jeqn} (with $x=k+j$) for each term $c_m(k+j-i)^m$ of $Q(k+j-i)$ (note that $k > j$ implies $2k+2j > 4j$) so that $s^{+}(K(2k+2j, k)) = s^{-}(K(2k+2j, k))$.
 \end{proof}


The number of positive, zero and negative eigenvalues of $K(n,k)$ are as follows \cite{elphick16}:

\[
n^+ = \binom{n - 1}{k} \mbox{  ;  } n^0 = 0 \mbox{  ;  } n^- = \binom{n - 1}{k - 1}.
\]
 
 Note that:

 \[
 s^+(K(n,k)) + s^-(K(n,k)) = 2m = \binom{n}{k} \binom{n - k}{k}, 
 \]
 so Theorems~\ref{thm:2k+2thm} and \ref{thm:knes1} imply that for $k > j \ge 1$,
 \[s^+(K(2k+2j, k)) = s^-(K(2k+2j, k)) = \frac12\binom{2k+2j}{k}\binom{k+2j}{k}.\]
 This raises the question:
 \begin{ques}
Are there closed-form formulae for $s^+$ and $s^-$ for any Kneser graph?
 \end{ques}
 
 \section{A weaker lower bound for square energies}
 
 Given the difficulty in proving Conjecture~\ref{con:elphick}, it is worthwhile to seek weaker lower bounds for square energies, such as the following.
 
 \begin{thm}
 
 For any connected graph $G$ with $n \ge 3$:
 \[
 \min{(s^-(G) , s^+(G))} \ge \sqrt{n}.
 \]
 \end{thm}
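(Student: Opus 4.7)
The plan is to combine the Ando--Lin inequality (stated in the introduction) with Wilf's classical bound $\chi(G)\le \mu_1+1$, after a preliminary case split on bipartiteness.

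If $G$ is bipartite, then $s^+(G)=s^-(G)=m$, and since $G$ is connected, $m\ge n-1$. The elementary inequality $(n-1)^2\ge n$, valid for $n\ge 3$, then gives $\min(s^+,s^-)\ge n-1\ge \sqrt{n}$.

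If $G$ is non-bipartite, then $G$ contains an odd cycle, hence is not a tree, and so $m\ge n$. I would extract two competing lower bounds on $\min(s^+,s^-)$. From Ando--Lin, $s^+/s^-,\,s^-/s^+\le \chi-1$, which forces $s^+,s^-\ge 2m/\chi$; combined with Wilf this yields
\[\min(s^+,s^-)\ge \frac{2m}{\chi}\ge \frac{2n}{\mu_1+1}.\]
Separately, using $s^+\ge \mu_1^2$ together with $s^-\ge s^+/(\chi-1)\ge \mu_1^2/\mu_1=\mu_1$ (applying Wilf a second time) gives $\min(s^+,s^-)\ge \mu_1$.

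The crux is to combine these two estimates via $\max(a,b)\ge \sqrt{ab}$:
\[\min(s^+,s^-)\ge \max\left(\mu_1,\,\frac{2n}{\mu_1+1}\right)\ge \sqrt{\frac{2n\mu_1}{\mu_1+1}}\ge \sqrt{n},\]
where the final inequality uses $\mu_1\ge 1$, which holds for any graph with an edge, to conclude $\mu_1/(\mu_1+1)\ge 1/2$. The main obstacle I anticipate is verifying that this AM--GM step has enough slack: the two bounds balance near $\mu_1\approx \sqrt{2n}$, which is precisely the AM--GM equality regime, so $\sqrt{n}$ comes out just barely; however, the argument does go through uniformly for all $n\ge 3$ and both parts of the case split use only ingredients already available in the paper.
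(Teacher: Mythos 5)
Your proof is correct, and it takes a genuinely different route from the paper's. The paper also starts from the Ando--Lin consequence $\min(s^+,s^-)\ge 2m/\chi$ and the bound $\chi\le 1+\mu_1$, but it then invokes Hong's inequality $\mu_1\le\sqrt{2m-n+1}$ to eliminate $\mu_1$ entirely, reducing the claim to the quadratic inequality $4m^2-2m(2\sqrt{n}+n)+n^2\ge 0$, which it verifies at $m=n$ (after disposing of trees) and extends by monotonicity in $m$. You instead keep $\mu_1$ as a free parameter and introduce a second lower bound not used in the paper, namely $\min(s^+,s^-)\ge\mu_1$ (from $s^+\ge\mu_1^2$ and $s^-\ge s^+/(\chi-1)\ge\mu_1^2/\mu_1$), then balance the two bounds with $\max(a,b)\ge\sqrt{ab}$. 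All the individual steps check out: bipartite connected graphs have $s^+=s^-=m\ge n-1\ge\sqrt{n}$ for $n\ge3$; non-bipartite connected graphs have $m\ge n$; and $\mu_1/(\mu_1+1)\ge 1/2$ since $\mu_1\ge1$. Your approach buys independence from Hong's theorem and avoids the quadratic bookkeeping; the paper's approach buys a bound expressed purely in $m$ and $n$ with no eigenvector-dependent intermediate quantity. One small correction to your closing worry: the AM--GM step is not actually tight here. The two bounds balance at $\mu_1\approx\sqrt{2n}$, where $\sqrt{2n\mu_1/(\mu_1+1)}\approx\sqrt{2n}$, so your argument in fact proves $\min(s^+,s^-)\ge\sqrt{n}$ with a factor of roughly $\sqrt{2}$ to spare in that regime; the only place the constant degrades to exactly $\sqrt{n}$ is the crude final estimate $\mu_1\ge 1$, which is loose precisely where the balance occurs.
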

 
 \begin{proof}
 
 We know using Ando and Lin's \cite{ando} lower bound for the chromatic number and Hong's \cite{hong} upper bound for the spectral radius that:
 
 \[
 \min{(s^+(G) , s^-(G))} \ge \frac{2m}{\chi(G)} \ge \frac{2m}{1 + \mu} \ge \frac{2m}{1 +\sqrt{2m - n + 1}}.
 \]

 We therefore are seeking to prove that:
 
 \[
 \frac{2m}{1 + \sqrt{2m - n + 1}} \ge \sqrt{n}.
 \]
 
 This is equivalent to:
 
 \[
 (2m - \sqrt{n})^2 \ge (2m - n + 1)n;
 \]
 
 which simplifies to:
 
 \begin{equation}\label{eqn:4m^2}
 4m^2 - 2m(2\sqrt{n} + n) + n^2 \ge 0.
 \end{equation}
 
 Since $G$ is connected, $m \ge n - 1$ and if $m = n - 1$ then $G$ is a tree with $s^+ = s^- = n - 1 \ge \sqrt{n}$ for $n \ge 3$. We can therefore assume $m \ge n$.  
 
 If $m = n$ then \eqref{eqn:4m^2} becomes:
 
 \[
 4n^2 - 4n\sqrt{n} - 2n^2 + n^2 = 3n^2 - 4n\sqrt{n} \ge 0 \mbox{ for } n \ge 3.
 \]
 
 The left hand side of \eqref{eqn:4m^2} monotonically increases with $m$, so if $m > n$ then \eqref{eqn:4m^2} remains true. This completes the proof. \end{proof}

 \section{Disconnected graphs}
 
Conjecture~\ref{con:elphick} is stated for connected graphs because it is false, for example, for $2K_{n/2}$. We can however prove the following result for regular graphs. 

\begin{thm}

   Let $G$ be a $d-$regular disconnected graph with no complete component. Then:

   \[
   \min{(s^+(G) , s^-(G))} \ge n - 1.
   \]
\end{thm}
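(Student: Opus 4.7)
The plan is to reduce to the connected case and invoke the regular case of Conjecture~\ref{con:elphick} that Elphick et al.~\cite{elphick16} have already proved. Decompose $G = G_1 \sqcup \cdots \sqcup G_k$ into its connected components; each $G_i$ is itself $d$-regular, and since $G$ has no complete component, $G_i \neq K_{d+1}$, forcing $n_i \ge d+2$ for every $i$ and in particular $d \ge 2$ (the only connected $1$-regular graph being $K_2$). The spectrum of $G$ is the multiset union of the spectra of the $G_i$, so the square energies split additively, and the connected regular case of Conjecture~\ref{con:elphick} applied componentwise yields
\[
\min\bigl(s^+(G), s^-(G)\bigr) \;\ge\; \sum_{i=1}^k (n_i - 1) \;=\; n - k.
\]

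This preliminary bound falls short of the target $n - 1$ by exactly $k - 1$, so the crux of the proof is to use the ``no complete component'' hypothesis to recover this deficit. Two structural refinements are available. On the positive side, the Perron eigenvalue $d$ has multiplicity $k$ in $A(G)$, so $s^+(G) \ge k d^2$, and componentwise $s^+(G_i) \ge \max(d^2, n_i - 1)$. On the negative side, since each $G_i$ is connected and non-complete, its smallest eigenvalue satisfies $\mu_n(G_i) < -1$ (a graph has smallest eigenvalue $-1$ if and only if every component is complete), which gives the strict inequality $s^-(G_i) > n_i - 1$. The plan is to combine these two sources of slack via a case split on whether $kd^2 \ge n - 1$: if so, the positive energy exceeds $n - 1$ directly and it only remains to gain $k - 1$ on the negative side; otherwise most $n_i$ are at most $d^2$, and the extra contributions $\max(d^2, n_i - 1) - (n_i - 1) \ge 1$ from those components should account for the positive-side deficit.

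I expect the main obstacle to be making the negative-side slack uniformly quantitative. Knowing only that $\mu_n(G_i)^2 > 1$ does not automatically yield a summable gain of at least $k - 1$, and one has to rule out the possibility that many non-bipartite components are simultaneously almost tight in the regular case of Conjecture~\ref{con:elphick}. A natural route is to split each component into the bipartite case, where $s^-(G_i) = n_i d / 2 \ge n_i$ outright since $d \ge 2$, and the non-bipartite case, where one leverages the multiplicity of $\mu_n(G_i)$ together with its strict separation from $-1$. Combining these component-wise gains over the $k$ components should close the $k - 1$ gap and complete the proof.
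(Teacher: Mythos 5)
Your route is genuinely different from the paper's. The paper never decomposes $G$ and sums over components: it applies the Ando--Lin bound $\min(s^+,s^-)\ge 2m/\chi(G)$ to the whole disconnected graph and controls $\chi(G)=\max_i\chi(G_i)$ by Brooks' theorem, which gives $\chi(G)\le\Delta=d$ provided no component is complete \emph{or an odd cycle}. Since $2m=nd$, this yields $\min(s^+,s^-)\ge 2m/d=n$ in one line, and the $n-k$ deficit that dominates your plan never appears. The leftover case ($d=2$ with an odd-cycle component, so all components are cycles) is treated separately by citing that odd cycles satisfy Conjecture~\ref{con:elphick}.

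The obstacle you flag in your own write-up is a genuine gap, and it cannot be closed: the $k-1$ deficit is not recoverable from component-level slack, because for some admissible components the slack is negative. Take $G$ to be $k$ disjoint copies of $C_5$, which is $2$-regular with no complete component. Then $s^+(C_5)=4+8\cos^2 72^\circ=7-\sqrt5\approx 4.764<5$, so $s^+(G)=k(7-\sqrt5)$ while $n-1=5k-1$, and $s^+(G)<n-1$ once $k\ge 5$. Your proposed positive-side gain $s^+(G_i)\ge\max(d^2,n_i-1)$ is worth nothing here ($d^2=4=n_i-1$), and the negative-side slack from $\mu_{\min}<-1$ accrues to $s^-$, not to $s^+$, which is the side that fails. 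More is true: for any odd cycle, $s^++s^-=2n_i$ with $s^+\ne s^-$, so $\min(s^+,s^-)=n_i-\delta$ for some $\delta>0$, and enough disjoint copies violate the claimed bound. This shows not only that your approach fails but that the theorem as stated is false; the paper's own Case 2, which sums per-component bounds for odd cycles and thereby loses $1$ per odd component, has exactly the defect you anticipated. The statement (and the paper's Brooks/Ando--Lin argument) is sound once one additionally excludes graphs with more than one odd-cycle component.
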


\begin{proof}
\emph{Case 1 : No component of $G$ is an odd cycle}

We follow the approach in \cite{elphick16}. Brooks' Theorem from 1941 proves that any connected graph other than a complete graph or an odd cycle has $\chi(G) \le \Delta$, where $\Delta$ is the largest vertex degree. This theorem can therefore be applied to each component of a graph, provided no component is complete or an odd cycle. As discussed in the Introduction, Ando and Lin proved that $s^+(G) \ge 2m/\chi(G)$ and $s^-(G) \ge 2m/\chi(G).$ Therefore:

\[
\min(s^+(G) , s^-(G)) \ge \frac{2m}{\chi(G)} \ge \frac{2m}{\Delta} = \frac{2m}{d} = n.
\]

\emph{Case 2: A component of $G$ is an odd cycle}

Since $G$ is 2-regular it follows that all components are cycles. The even cycles have $s^+ = s^- = m = n$. Abiad \emph{et al.} \cite{abiad} proved that all odd cycles satisfy Conjecture~\ref{con:elphick}. Therefore $G$ satisfies Conjecture~\ref{con:elphick}.

\end{proof}

This result raises questions. Will a proof of Conjecture~\ref{con:elphick} need to have connectedness at its heart, or can Conjecture~\ref{con:elphick} be generalised to any graph subject to a limited number of exclusions, which include isolated vertices and complete components? Can Conjecture~\ref{con:elphick} be generalised for all graphs (apart from isolated vertices) for $s^+$ but not for $s^-$?

Let $G$ be any graph for which $s^+ = n - \epsilon$ or $s^- = n - \epsilon$, where $0 < \epsilon \le 1$. It then follows that if we take a sufficiently large number of copies of $G$, then this disconnected graph will not satisfy Conjecture~\ref{con:elphick} for $s^+$ or $s^-$ respectively. There are numerous non-bipartite connected graphs which have $s^+ < n$ or $s^- < n$.

Consequently, it seems implausible that Conjecture~\ref{con:elphick} could be proved for disconnected irregular graphs for $s^+$ or $s^-$ with only a limited number of exclusions. This provides evidence that connectedness will be central to a proof of Conjecture~\ref{con:elphick} for $s^+$ and $s^-$, which provides another example of symmetry between $s^+$ and $s^-$. 

If connectedness is central to  a proof of Conjecture~\ref{con:elphick}, then the following weaker conjecture may be more tractable. Let $n^+, n^0$ and $n^-$ denote the number of positive, zero and negative eigenvalues respectively, where $n^+ + n^0 + n^- = n$. 

\begin{con}
For any graph $G$ with inertia $(n^+, n^0, n^-)$:

\[
\min{(s^+(G) , s^-(G))} \ge \max{(n^+ ,n^-)}.
\]
    
\end{con}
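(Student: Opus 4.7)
The conjecture is equivalent to the four inequalities $s^{\pm}(G)\ge n^{\pm}(G)$ and $s^{\pm}(G)\ge n^{\mp}(G)$. The first step is to reduce to connected graphs: since $s^{\pm}$ and $n^{\pm}$ are additive under disjoint union and $\max(a_1+a_2,b_1+b_2)\le \max(a_1,b_1)+\max(a_2,b_2)$, the conjecture on each connected component of $G$ immediately implies it for $G$. For a connected graph with at least one edge, $\min(n^+,n^-)\ge 1$, so $\max(n^+,n^-)\le n-1$, and the weaker conjecture is already implied by Conjecture~\ref{con:elphick}. Thus the real interest is to prove it without invoking Conjecture~\ref{con:elphick}.

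A direct approach would begin by applying Cauchy--Schwarz to the positive and negative eigenvalues separately:
\[
s^{\pm}(G)\ge \frac{(\sigma^{\pm})^2}{n^{\pm}},\qquad \sigma^+=\sum_{\mu_i>0}\mu_i,\quad \sigma^-=\sum_{\mu_i<0}|\mu_i|.
\]
Since $\mathrm{tr}(A)=0$, $\sigma^+=\sigma^-=E(G)/2$, the half-energy. Assuming $n^+\ge n^-$ (the other case is symmetric), it would then suffice to prove the energy inequality
\[
E(G)\ge 2\max(n^+,n^-),
\]
because this yields $s^+\ge (n^+)^2/n^+=n^+$ and $s^-\ge (n^+)^2/n^-\ge n^+$, establishing all four required inequalities simultaneously. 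This target bound is tight for both $K_n$ (where $E=2(n-1)=2n^-$) and $kK_2$ (where $E=2k=2n^{\pm}$), suggesting that it captures the correct extremal behavior, and it strictly strengthens the classical bound $E(G)\ge n^++n^-$ --- derivable from AM--GM applied to the nonzero eigenvalues together with $|\prod_{\mu_i\ne 0}\mu_i|\ge 1$ --- whenever $n^+\ne n^-$.

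The main obstacle is proving the refined energy inequality. Standard lower bounds such as McClelland's $E(G)\ge 2\sqrt{m}$ are insufficient (for $K_n$, $\sqrt{m}\ll n-1$). Applying AM--GM separately to the positive and negative spectra yields $\sigma^{\pm}\ge n^{\pm}(P^{\pm})^{1/n^{\pm}}$ with $P^{\pm}=\prod_{\pm\mu_i>0}|\mu_i|$; although integrality forces $P^+P^-\ge 1$, neither factor must individually be at least $1$ (e.g.\ $P^+(C_5)\approx 0.764$), so this argument alone falls short. A complete proof will likely need additional structural input; promising candidates include the decomposition $A=A^+-A^-$ into positive semidefinite parts of ranks $n^+$ and $n^-$ with $\mathrm{diag}(A^+)=\mathrm{diag}(A^-)$, and the inertia-subadditivity bound $\max(n^+,n^-)\le m$ obtained by writing $A$ as a sum of $m$ rank-$2$ edge matrices and iterating $n^+(X+Y)\le n^+(X)+n^+(Y)$. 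A very different strategy would bypass the energy bound entirely and tackle the four inequalities via direct combinatorial-spectral arguments graph by graph.
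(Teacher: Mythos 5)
First, a point of order: the statement you were asked to prove is Conjecture~2 of the paper, not a theorem. The authors offer no proof of it --- they propose it precisely as a (hopefully more tractable) weakening of Conjecture~\ref{con:elphick} that could hold for all graphs, since it is closed under disjoint union. So there is no proof in the paper to compare yours against, and any complete argument you gave would be new mathematics.

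Your reduction is correct as far as it goes: additivity of $s^{\pm}$ and $n^{\pm}$ over components combined with superadditivity of $\min$ and subadditivity of $\max$ does reduce to connected graphs; Cauchy--Schwarz gives $s^{\pm}\ge(\sigma^{\pm})^{2}/n^{\pm}$; and $\operatorname{tr}(A)=0$ gives $\sigma^{+}=\sigma^{-}=E(G)/2$, so the conjecture would indeed follow from $E(G)\ge 2\max(n^{+},n^{-})$. That is a clean and genuinely useful observation. But it is a reduction, not a proof: the energy--inertia inequality $E(G)\ge 2\max(n^{+},n^{-})$ is itself an unproven conjecture (your own discussion concedes that AM--GM only yields $E(G)\ge n^{+}+n^{-}$, since $P^{+}$ and $P^{-}$ need not individually be at least $1$, as $C_{5}$ shows), and none of the ``promising candidates'' you list is developed into an argument. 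So the proposal replaces one open conjecture with another of comparable difficulty and stops there; the gap is the entire proof of the target energy bound. If you pursue this, the reduction itself is worth recording, but you should present it as a conditional implication rather than as a proof strategy that is merely missing details.
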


\section{Initial ideas for a proof of Conjecture~\ref{con:elphick}}

This paper has extended the early symmetric results on square energies to consider asymmetry. For example, we have determined $s^{+}$ and $s^{-}$ a.s. for almost all graphs. The results in Section 9 seem to be most relevant to a potential proof of Conjecture~\ref{con:elphick}. 

As discussed in \cite{elphick16}, Conjecture~\ref{con:elphick} relates to irreducible, symmetric, binary, zero-trace matrices. The difficulty of proving the conjecture appears to be due to the need for graph connectedness, which is equivalent to matrix irreducibility, to be central to a proof. So perhaps a proof can be expected to use a spectral resolution of $A$ as follows, where $v_1, ... ,v_n$ are the column unit eigenvectors corresponding to $\mu_1, ..., \mu_n$ and $v_i^T$ denotes the transpose of $v_i$:

\[
A= \sum_{i=1}^n \mu_i v_i v_i^T; \mbox{     } B= \sum_{\mu_i > 0} \mu_i v_i v_i^T; \mbox{   } C = \sum_{\mu_i < 0} (-\mu_i) v_i v_i^T.
\]

It is then the case that:

\[
A = B - C; s^+ = Tr(B^2); s^- = Tr(C^2) \mbox{  and  } PAP^{-1} \not = \begin{pmatrix} E & F \\ 0 & G \end{pmatrix},
\]

where $P$ is a permutation matrix and $E$ and $G$ are square matrices of size $\ge 1$. These formulae for $s^+$ and $s^-$ are used in \cite{wocjan13}, where it is noted that $B$ and $C$ are both positive semidefinite.

\begin{ques}\label{ques:irrques}
    Does the irreducibility of $A$ imply that $B$ and $C$ are both irreducible?
\end{ques}

If the answer to Question~\ref{ques:irrques} is yes, then a proof could use that:

\[
PBP^{-1} \not = \begin{pmatrix} E & F \\ 0 & G \end{pmatrix} \mbox{   and   }PCP^{-1} \not = \begin{pmatrix} E & F \\ 0 & G \end{pmatrix}\],

or an alternative property of irreducible, positive semidefinite matrices. 

Perhaps it is possible to generalise the conjecture, for example by replacing $A$ with a weighted adjacency matrix with positive weights. Such a generalisation would suggest that the conjecture may relate not only to graphs but could be applicable to broader classes of matrices. 

\section*{Acknowledgements}

The authors thank the reviewers for their careful and helpful comments on the paper.

\end{document}